\newcommand{\R}{{\mathbb{R}}}
\newcommand{\C}{{\mathbb{C}}}
\newcommand{\Z}{{\mathbb{Z}}}
\newcommand{\res}{\text{\upshape Res} \,}
\newcommand{\cut}{\text{cut}}
\newtheorem{theorem}{Theorem}[section]
\newtheorem{remark}[theorem]{Remark}
\newtheorem{RHproblem}[theorem]{RH problem}
\numberwithin{equation}{section}
\begin{document}

\title[The Implementation of the Unified Transform]{The Implementation of the Unified Transform \\ to the Nonlinear Schr\"odinger Equation with \\ Periodic Initial Conditions}

\author{B. Deconinck}
\address{Department of Applied Mathematics, University of Washington, \\ Seattle, WA 98195-3925, United States.}
\email{deconinc@uw.edu}

\author{A. S. Fokas}
\address{Department of Applied Mathematics and Theoretical Physics, \\ University of Cambridge, Cambridge CB3 0WA, United Kingdom.}
\email{t.fokas@damtp.cam.ac.uk} 

\author{J. Lenells}
\address{Department of Mathematics, KTH Royal Institute of Technology, \\ 100 44 Stockholm, Sweden.}
\email{jlenells@kth.se}

\begin{abstract} 
\noindent
The unified transform method (UTM) provides a novel approach to the analysis of initial-boundary value problems for linear as well as for a particular class of nonlinear partial differential equations called integrable. If the latter equations are formulated in two dimensions (either one space and one time, or two space dimensions), the UTM expresses the solution in terms of a matrix Riemann-Hilbert (RH) problem with explicit dependence on the independent variables. For nonlinear integrable evolution equations, such as the celebrated nonlinear Schr\"odinger (NLS) equation, the associated jump matrices are computed in terms of the initial conditions and all boundary values. The unknown boundary values are characterized in terms of the initial datum and the given boundary conditions via the analysis of the so-called global relation. In general, this analysis involves the solution of certain nonlinear equations. In certain cases, called linearizable, it is possible to bypass this nonlinear step. In these cases, the UTM solves the given initial-boundary value problem with the same level of efficiency as the well-known inverse scattering transform solves the initial value problem on the infinite line. 
We show here that the initial-boundary value problem on a finite interval with $x$-periodic boundary conditions (which can alternatively be viewed as the initial value problem on a circle), belongs to the linearizable class. Indeed, by employing certain transformations of the associated RH problem and by using the global relation, the relevant jump matrices can be expressed explicitly in terms of the so-called scattering data, which are computed in terms of the initial datum. Details are given for NLS, but similar considerations are valid for other well-known integrable evolution equations, including the Korteweg-de Vries (KdV) and modified KdV equations. 
\end{abstract}

\maketitle

\section{Introduction}
Two of the most extensively studied classes of solutions of nonlinear partial differential equations of evolution type are (i) solutions with decay at spatial infinity and (ii) solutions which are periodic in space. For {\it integrable} evolution equations, the introduction of the Inverse Scattering Transform (IST) about half a century ago had the groundbreaking implication that the initial value problem (IVP) for solutions in the first of these classes could be solved by means of only {\it linear} operations \cite{GGKM1967}. More precisely, this approach expresses the solution of the IVP problem in terms of the solution of a linear singular integral equation, or, equivalently, in terms of the solution of a Riemann-Hilbert (RH) problem. 
Since this RH problem is formulated in terms of spectral functions whose definition only involves the given initial data, the solution formula is effective.

For the second of the above classes---the class of spatially periodic solutions---the introduction of the so-called finite-gap integration method in the 1970s had far-reaching implications, see \cite{BBEIM1994}. This approach makes it possible to generate large classes of exact solutions, the so-called finite-gap solutions, by considering rational combinations of theta functions defined on Riemann surfaces with a finite number of branch cuts. The construction of solutions via this approach, in particular of the Korteweg-de Vries (KdV) and the nonlinear Schr\"odinger (NLS) equations, has a long and illustrious history \cite{M2008}. The theory  combines several branches of mathematics, including the spectral theory of differential operators with periodic coefficients, Riemann surface theory, and algebraic geometry, and has had implications for diverse areas of mathematics \cite{BBEIM1994, M2008, GH2003}.  

Despite the unquestionable success of the finite-gap approach, two questions naturally present themselves: 
\begin{enumerate}[$-$]

\item How effective is the finite-gap approach for the solution of the IVP for general periodic initial data? If the initial data are in the finite-gap class, the solution of the IVP can be retrieved by solving a Jacobi inversion problem on a finite-genus Riemann surface. However, if the initial data are not in the class of finite-gap potentials, then a theory of infinite-gap solutions is required. Although some aspects of the approach certainly can be extended to Riemann surfaces of infinite-genus \cite{FKT2003, MvM1975, MSS1998}, there are several complications associated with solving the Jacobi inversion problem on an infinite-genus surface; in fact, from a computational point of view even finite-gap solutions are not easily accessible \cite{BK2011, BBEIM1994}.

\item Why is the finite-gap integration method conceptually so different from the IST formalism? For example, whereas the IST relies on the solution of a RH problem, the algebro-geometric approach calls for the solution of a Jacobi inversion problem.
(A Jacobi inversion problem can be regarded as a RH problem on a branched plane \cite{MN2019}, but conceptually such an approach appears indirect.)
\end{enumerate}

We propose a new approach for the solution of the space-periodic IVP for an integrable evolution equation. Just like in the implementation of the IST for the problem on the line, 
our approach expresses the solution of the periodic problem in terms of the solution of a RH problem, and the definition of this RH problem involves only the given initial data. Thus the presented solution of the periodic problem is conceptually analogous to the solution of the problem on the line. 

Let us explain the ideas that led us to the presented approach. Although the space-periodic problem is often viewed as an IVP on the circle, it can also be viewed as an initial-boundary value problem (IBVP) on an interval with periodic boundary conditions. In 1997, one of the authors introduced a methodology for the solution of IBVPs for integrable equations \cite{F1997}.
The implementation of this method, known as the Unified Transform or Fokas Method, to an IBVP typically consists of two steps. The first step is to construct an integral representation of the solution characterized by a RH problem formulated in the complex $k$-plane, where $k$ denotes the spectral parameter of the associated Lax pair. Since this representation involves, in general, some unknown boundary values, the solution formula is not yet effective. The second step is therefore to characterize the unknown boundary values by analyzing a certain equation called the {\it global relation}. In general, the second step involves the solution of a nonlinear problem. However, for certain so-called {\it linearizable} boundary conditions, additional symmetries can be used to eliminate the unknown boundary values from the formulation of the RH problem altogether. 

It turns out that the boundary conditions corresponding to the space-periodic problem are linearizable. This suggests that it should be possible to use the above philosophy to obtain an effective solution. Although the analysis is quite different from that of other linearizable problems, we have indeed been able to find a RH problem whose formulation only involves the initial data. 

The new approach is as effective for the periodic problem as the IST is for the problem on the line in the sense that all ingredients of the RH problem are defined in terms of quantities obtained by solving a linear Volterra integral equation whose kernel is characterized by the initial data. 
However, the formulation of the RH problem in the periodic case is complicated by the fact that the jump contour is determined by the zeros of an entire function (more precisely, the jump contour involves the spectral gaps known from the finite-gap approach). 

Section \ref{section2} presents the main result. Section \ref{constantexamplesec} presents details for the example when $q_0(x) = q_0$ is a constant, which provides an illustration of the effectiveness of the new formalism, as well as a verification of its correctness.

\section{The Main Result}\label{section2}
We write $D_1, \dots D_4$ for the four open quadrants of the complex plane, see Figures \ref{contourfig} and \ref{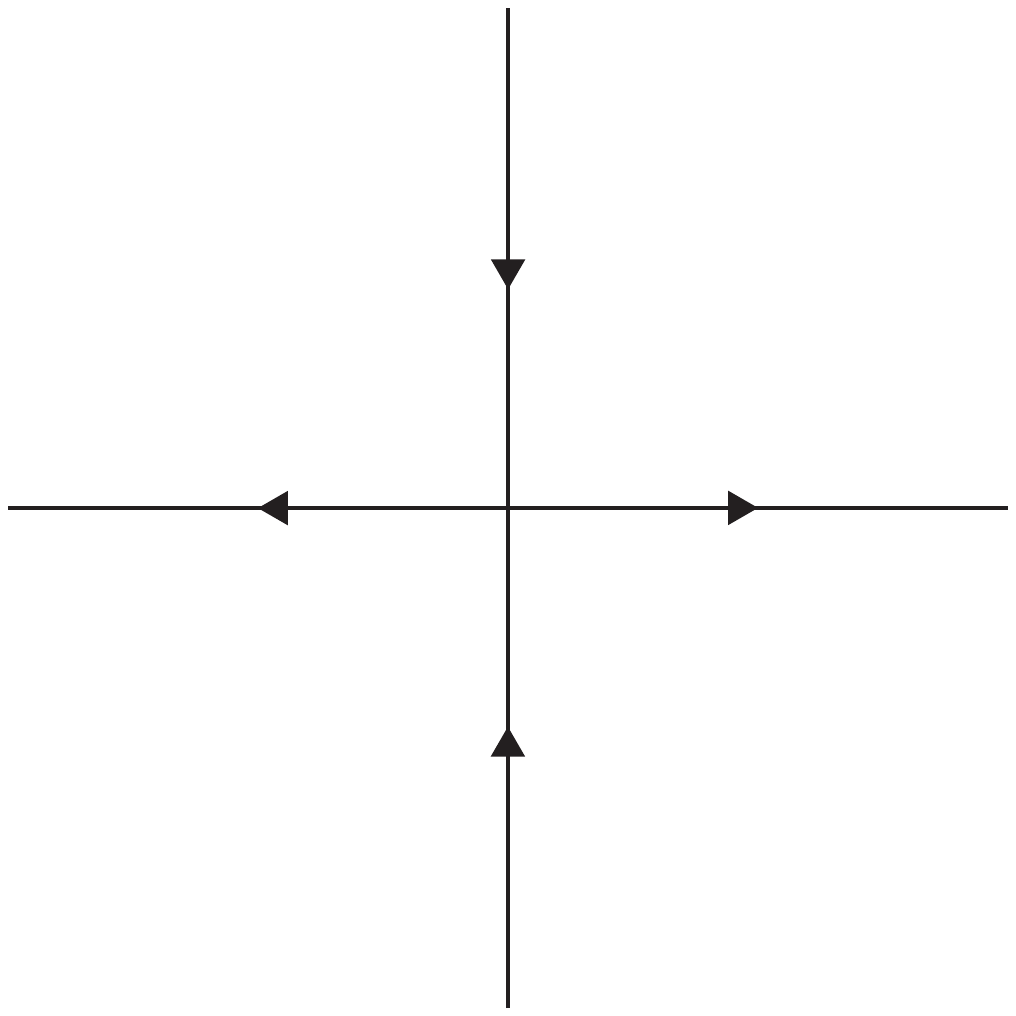}. A bar over a function  denotes that the Schwarz conjugate is taken, i.e., $\bar{f}(k) = \overline{f(\bar{k})}$. We define $\theta$ by
\begin{align}\label{thetadef}
\theta = \theta(x,t,k) = kx+2k^2 t. 
\end{align}

\begin{theorem}\label{mainth}
Suppose that $q(x,t)$ is a smooth solution of the NLS equation
\begin{align}\label{NLS}
i q_t+q_{xx}-2\lambda q |q|^2=0, \qquad \lambda = \pm 1,
\end{align}
which is $x$-periodic of period $L > 0$, i.e., $q(x+L, t) = q(x,t)$.
Define $a(k)$ and $b(k)$ in terms of the initial datum 
\begin{align}
q_0(x) = q(x,0), \qquad 0 < x < L,
\end{align}
by 
\begin{align}
 a(k) = \mu^{(2)}(0,k), \qquad b(k) = \mu^{(1)}(0,k), \qquad k \in \C,
\end{align}
where the vector $\big(\mu^{(1)}(x,k), \mu^{(2)}(x,k)\big)$ satisfies the linear Volterra integral equation
\begin{align}
\begin{cases} \displaystyle{ \mu^{(1)}(x,k) = - \int_x^L e^{2ik (y-x)} q_0(y) \mu^{(2)}(y,k) dy,}
  \vspace{.05cm}	\\
\displaystyle{ \mu^{(2)}(x,k) = 1 - \lambda \int_x^L \overline{q_0(y)} \mu^{(1)}(y,k) dy, }
 \end{cases}
 \qquad 0 < x < L, \;\; k \in \C.
\end{align}
Define $\tilde{\Gamma}(k)$  in terms of $a(k)$  and $b(k)$  by
\begin{align}\label{tildeGammadef}
\tilde{\Gamma} = \frac{\lambda}{2 e^{ikL} \bar{b}} \Big(\bar{a} e^{ikL} - a e^{-ikL} - i\sqrt{4 - \Delta^2}\Big), 
\end{align}
where $\Delta(k)$ is given by 
\begin{align}\label{Deltadef}
  \Delta = a e^{-ikL}+\bar{a} e^{ikL}, \qquad k \in \C.
\end{align}

Define the contour $\tilde{\Sigma}$ by 
$\tilde{\Sigma} = \R \cup i\R \cup \mathcal{C}$,
where $\mathcal{C}$ denotes a set of branch cuts of $\sqrt{4 - \Delta^2}$ consisting of subintervals of $\R$ and vertical line segments which connect the zeros of odd order of $4 - \Delta^2$; orient $\mathcal{C}$ as in Figure \ref{contourfig} and let $\tilde{\Sigma}_\star$ denote the contour $\tilde{\Sigma}$ with all branch points of $\sqrt{4 - \Delta^2}$ and points of self-intersection removed, see \cite{FLxperiodic} for details.

Assume that the function $\tilde{\Gamma}$ has no poles on the contour $\tilde{\Sigma}$ and that it has at most finitely many simple poles in $D_1 \cup D_3$; denote the possible poles in $D_1$  by $\{p_j\}_1^{n_1}$ and the possible poles in $D_3$ by $\{q_j\}_{1}^{n_2}$. Let $\tilde{P} = \{p_j, \bar{p}_j\}_1^{n_1} \cup \{q_j, \bar{q}_j\}_1^{n_2}$.

Consider the following RH problem: Find a $2 \times 2$-matrix-valued function $\tilde{m}$ such that
\begin{itemize}
\item $\tilde{m}(x,t,\cdot) : \C \setminus (\tilde{\Sigma} \cup \tilde{P}) \to \C^{2 \times 2}$ is analytic.

\item The limits of $m(x,t,k)$ as $k$ approaches $\tilde{\Sigma}_\star$ from the left $(+)$ and right $(-)$ exist, are continuous on $\tilde{\Sigma}_\star$, and satisfy
\begin{align}\label{mtildejump}
  \tilde{m}_-(x,t,k) = \tilde{m}_+(x, t, k) \tilde{v}(x, t, k), \qquad k \in \tilde{\Sigma}_\star,
\end{align}
where $\tilde{v}$ is defined by 
\begin{align}\label{tildevdef}
\tilde{v} = \begin{cases}  \tilde{v}_1 = \begin{pmatrix}  \frac{a  - \lambda b \bar{\tilde{\Gamma}} - \lambda \tilde{\Gamma} (\bar{a}\bar{\tilde{\Gamma}} - \bar{b})e^{2ikL} }{a- \lambda b \bar{\tilde{\Gamma}}} & -\tilde{\Gamma}  e^{2ikL} e^{-2 i \theta } \\
 \frac{\lambda \bar{\tilde{\Gamma}} e^{2 i \theta } }{(a - \lambda b \bar{\tilde{\Gamma}}) (a + \lambda\bar{b} \tilde{\Gamma}  e^{2ikL})} & \frac{a}{a + \lambda \bar{b} \tilde{\Gamma}  e^{2ikL}} \end{pmatrix}, & k \in i\R_+ \setminus \bar{\mathcal{C}},
	\\ 
\tilde{v}_2 =  \begin{pmatrix} 1 - \lambda \tilde{\Gamma} \bar{\tilde{\Gamma}} 
& -\frac{(\bar{a} \tilde{\Gamma} e^{2ikL}+b) e^{-2 i \theta }}{\bar{a}+ \lambda b \bar{\tilde{\Gamma}} e^{-2ikL}} \\
 \frac{\lambda (a \bar{\tilde{\Gamma}}e^{-2ikL}+\bar{b}) e^{2 i \theta } }{a+ \lambda \bar{b} \tilde{\Gamma}  e^{2ikL} } &
   \frac{1}{(a+ \lambda \bar{b} \tilde{\Gamma}  e^{2ikL}) (\bar{a} + \lambda b \bar{\tilde{\Gamma}} e^{-2ikL})}  \end{pmatrix}, & k \in \R_+ \setminus \bar{\mathcal{C}},
   	\\  
\tilde{v}_3 = \begin{pmatrix} \frac{\bar{a}  - \lambda \bar{b} \tilde{\Gamma} - \lambda \bar{\tilde{\Gamma}}(a\tilde{\Gamma} - b) e^{-2ikL}}{\bar{a} - \lambda \bar{b} \tilde{\Gamma} } & - \frac{\tilde{\Gamma} e^{-2 i \theta}}{(\bar{a} - \lambda \bar{b} \tilde{\Gamma}) (\bar{a} + \lambda b \bar{\tilde{\Gamma}} e^{-2ikL} )} \\
 \lambda \bar{\tilde{\Gamma}} e^{-2ikL} e^{2 i \theta} & \frac{\bar{a}}{\bar{a} + \lambda b \bar{\tilde{\Gamma}} e^{-2ikL}}  \end{pmatrix}, & k \in i\R_- \setminus \bar{\mathcal{C}},
   	\\ 
 \tilde{v}_4 = \begin{pmatrix}	
 \frac{1 - \lambda \tilde{\Gamma} \bar{\tilde{\Gamma}}}{(a - \lambda b \bar{\tilde{\Gamma}}) (\bar{a} - \lambda \bar{b} \tilde{\Gamma} )} & -\frac{(a \tilde{\Gamma} - b)e^{-2 i \theta } }{\bar{a} - \lambda \bar{b} \tilde{\Gamma} } \\
 \frac{\lambda(\bar{a} \bar{\tilde{\Gamma}} - \bar{b})e^{2 i \theta }}{a - \lambda b \bar{\tilde{\Gamma}}} & 1
 \end{pmatrix}, & k \in \R_- \setminus \bar{\mathcal{C}},
 	\\
\tilde{v}^{\cut}_{D_1} = 
\begin{pmatrix} \frac{a + \lambda \bar{b} \tilde{\Gamma}_+ e^{2ikL}}{a + \lambda \bar{b} \tilde{\Gamma}_- e^{2ikL}} & (\tilde{\Gamma}_- - \tilde{\Gamma}_+) e^{2ikL} e^{-2i\theta} \\ 0 & \frac{a + \lambda \bar{b} \tilde{\Gamma}_- e^{2ikL}}{a + \lambda \bar{b}\tilde{\Gamma}_+ e^{2ikL}} \end{pmatrix}, & k \in \mathcal{C} \cap D_1,
	\\
\tilde{v}^{\cut}_{D_2} = \begin{pmatrix} 1 & 0 \\ 
\frac{\lambda (\bar{\tilde{\Gamma}}_- - \bar{\tilde{\Gamma}}_+)e^{2i\theta}}{(a - \lambda b \bar{\tilde{\Gamma}}_-)(a - \lambda b \bar{\tilde{\Gamma}}_+)} & 1 \end{pmatrix},
& k \in \mathcal{C} \cap D_2,
	\\
\tilde{v}^{\cut}_{D_3} = \begin{pmatrix} 1 & \frac{(\tilde{\Gamma}_- - \tilde{\Gamma}_+)e^{-2i\theta}}{(\bar{a} - \lambda \bar{b} \tilde{\Gamma}_-)(\bar{a} - \lambda \bar{b} \tilde{\Gamma}_+)} \\
0 & 1 \end{pmatrix}, & k \in \mathcal{C} \cap D_3,
	\\
\tilde{v}^{\cut}_{D_4} = \begin{pmatrix} \frac{\bar{a} + \lambda b \bar{\tilde{\Gamma}}_- e^{-2ikL}}{\bar{a} + \lambda b \bar{\tilde{\Gamma}}_+ e^{-2ikL}} & 0 \\
\lambda(\bar{\tilde{\Gamma}}_- - \bar{\tilde{\Gamma}}_+) e^{2i\theta} e^{-2ikL} & \frac{\bar{a} + \lambda b \bar{\tilde{\Gamma}}_+ e^{-2ikL}}{\bar{a} + \lambda b \bar{\tilde{\Gamma}}_- e^{-2ikL}} \end{pmatrix}, & k \in \mathcal{C} \cap D_4,
	\\
\tilde{v}_1^{\cut} = \tilde{v}^{\cut}_{D_1} \tilde{v}_{1-},
& k \in \mathcal{C} \cap i\R_+,	
	\\
\tilde{v}_2^{\cut} = \tilde{v}^{\cut}_{D_1} \tilde{v}_{2-},
& k \in \mathcal{C} \cap \R_+,	
	\\
\tilde{v}_3^{\cut} = \tilde{v}^{\cut}_{D_3} \tilde{v}_{3-},
& k \in \mathcal{C} \cap i\R_-,	
	\\
\tilde{v}_4^{\cut} = \tilde{v}^{\cut}_{D_3} \tilde{v}_{4-},
& k \in \mathcal{C} \cap \R_-,
\end{cases}
\end{align}
and $\tilde{v}_{j-}$ denotes the boundary values of $\tilde{v}_j$ as $k$ approaches $\mathcal{C}$ from the right.

\item $\tilde{m}(x,t,k) = I  + O\big(k^{-1}\big)$ as $k \to \infty$, $k \in \C \setminus \cup_{n\in \Z} \mathcal{D}_n$, where $\mathcal{D}_n$ is the open disk 
\begin{align}\label{Dndef}
\mathcal{D}_n = \bigg\{k \in \C \, \bigg| \, \bigg|k - \frac{n\pi}{L}\bigg| < \frac{\pi}{4L}\bigg\}.
\end{align}

\item $\tilde{m}(x,t,k) = O(1)$ as $k \to \tilde{\Sigma} \setminus \tilde{\Sigma}_\star$, $k \in \C \setminus \tilde{\Sigma}$.

\item At the points $p_j \in D_1$  and  $\bar{p}_j \in D_4$, $\tilde{m}$ satisfies, for $j = 1, \dots, n_1$,
\begin{subequations}\label{pqresidues}
\begin{align}
& \underset{k = p_j}{\res} [\tilde{m}(x,t,k)]_2 =  \Big[[\tilde{m}]_1 (a + \lambda \bar{b} \tilde{\Gamma} e^{2ikL})\bar{a} e^{2ikL} e^{-2i\theta} \Big](x,t,p_j) \, \underset{k=p_j}{\res} \tilde{\Gamma}(k),
	\\
& \underset{k=\bar{p}_j}{\res} [\tilde{m}(x,t,k)]_1 = \Big[\lambda [\tilde{m}]_2 (\bar{a} + \lambda b \bar{\tilde{\Gamma}} e^{-2ikL}) a e^{-2ikL} e^{2i\theta} \Big](x,t,\bar{p}_j) \, \overline{\underset{k=p_j}{\res} \tilde{\Gamma}(k)}.
\end{align}

\item At the points $q_j \in D_3$  and  $\bar{q}_j \in D_2$, $\tilde{m}$ satisfies, for $j = 1, \dots, n_2$,
\begin{align}
& \underset{k=q_j}{\res} [\tilde{m}(x,t,k)]_2 = \bigg[ [\tilde{m}]_1 \frac{a e^{-2i\theta}}{\bar{a} - \lambda \bar{b} \tilde{\Gamma}}\bigg](x,t,q_j) \, \underset{k=q_j}{\res} \tilde{\Gamma}(k),
	\\
& \underset{k=\bar{q}_j}{\res} [\tilde{m}(x,t,k)]_1 = \bigg[\lambda [\tilde{m}]_2 \frac{\bar{a} e^{2i\theta}}{a - \lambda b \bar{\tilde{\Gamma}}}\bigg](x,t,\bar{q}_j) \, \overline{\underset{k=q_j}{\res} \tilde{\Gamma}(k)}.
\end{align}
\end{subequations}

\end{itemize}

\begin{figure}
\begin{center}
\begin{overpic}[width=.45\textwidth]{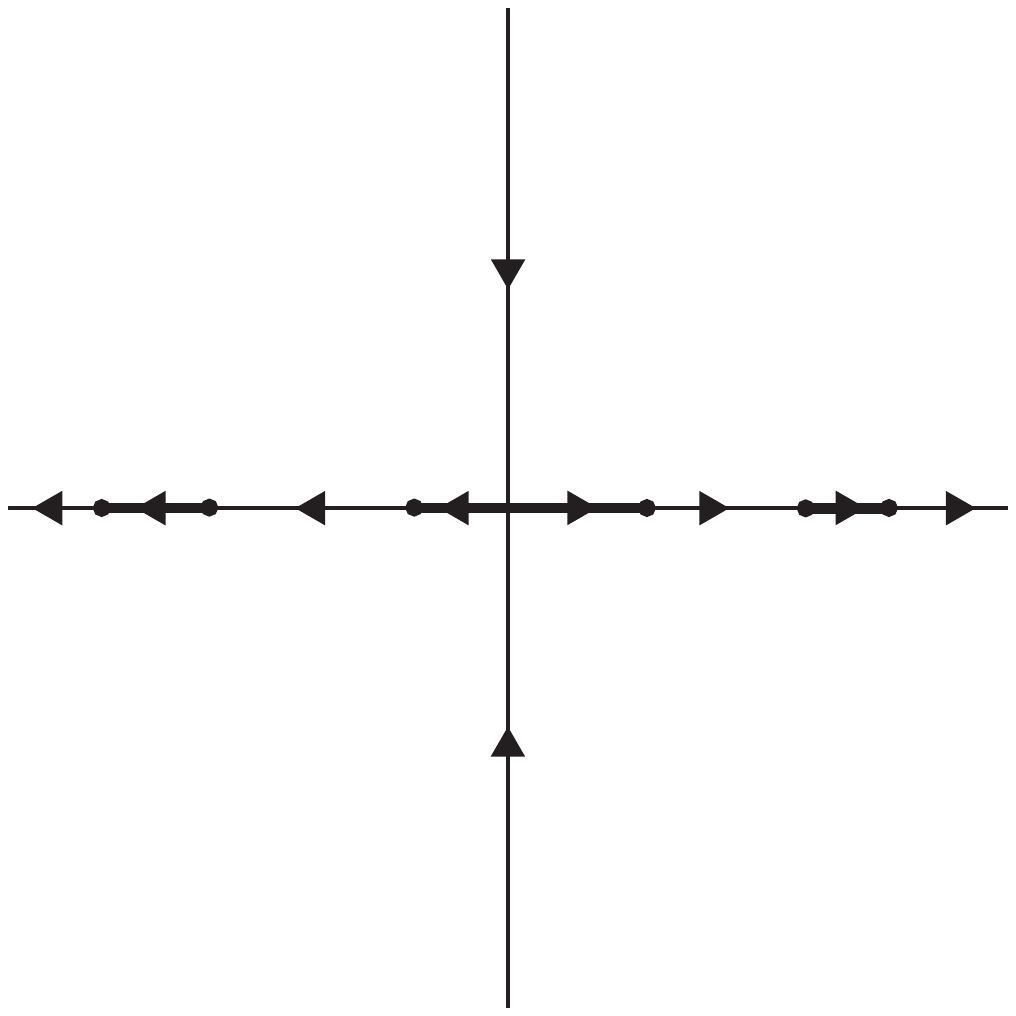}
      \put(74,76){\small $D_1$} 
      \put(20,76){\small $D_2$} 
      \put(20,22){\small $D_3$} 
      \put(74,22){\small $D_4$} 
      \put(52.5,72){\small $\tilde{v}_1$} 
      \put(92,53.5){\small $\tilde{v}_2$} 
      \put(54,43.5){\small $\tilde{v}_2^{\cut}$} 
      \put(68,53.5){\small $\tilde{v}_2$} 
      \put(80,43.5){\small $\tilde{v}_2^{\cut}$} 
      \put(52.5,25){\small $\tilde{v}_3$} 
      \put(3,53.5){\small $\tilde{v}_4$} 
      \put(12,43.5){\small $\tilde{v}_4^{\cut}$} 
      \put(29,53.5){\small $\tilde{v}_4$} 
      \put(41,43.5){\small $\tilde{v}_4^{\cut}$} 
     \end{overpic}\qquad
     \begin{overpic}[width=.45\textwidth]{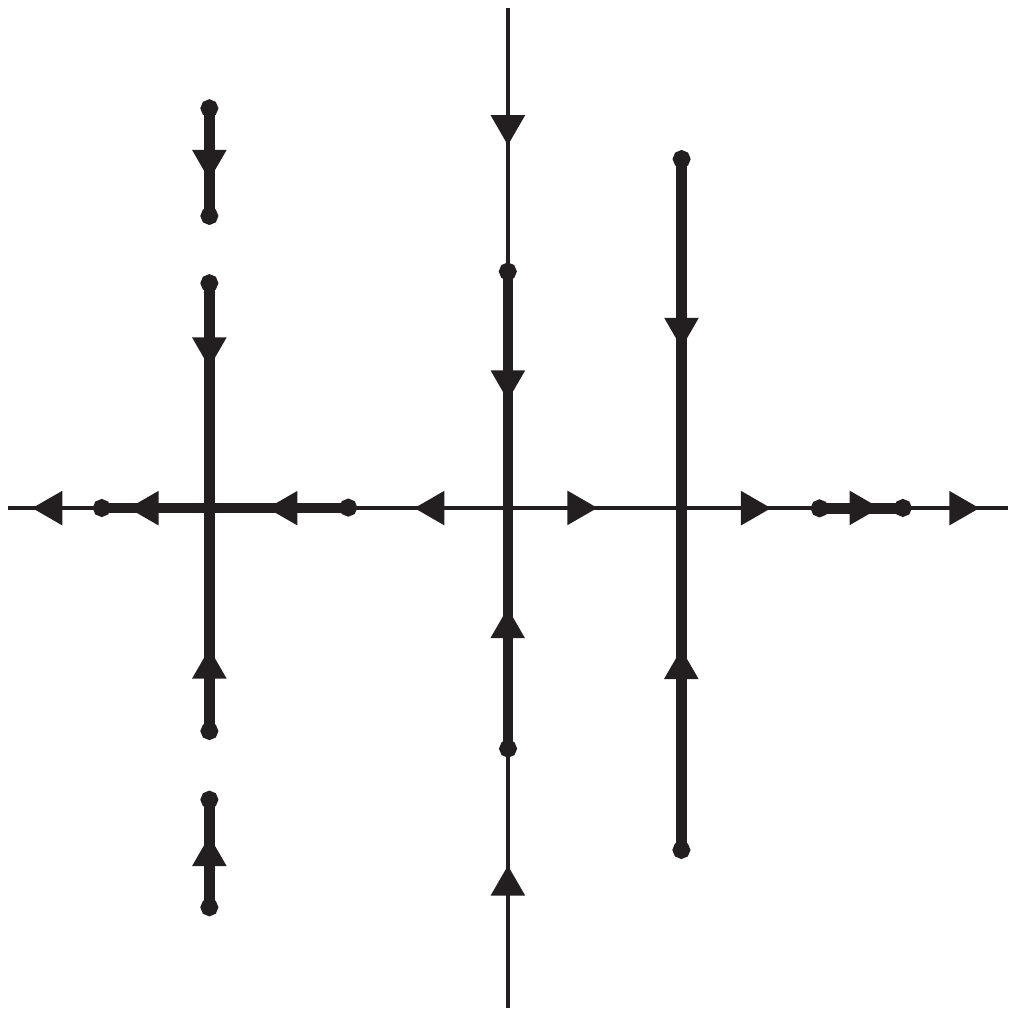}
      \put(84,76){\small $D_1$} 
      \put(7,76){\small $D_2$} 
      \put(7,22){\small $D_3$} 
      \put(84,22){\small $D_4$} 
      \put(52.5,86){\small $\tilde{v}_1$} 
      \put(52.5,62){\small $\tilde{v}_1^{\cut}$} 
      \put(69.5,67){\small $\tilde{v}_{D_1}^{\cut}$} 
      \put(55,53){\small $\tilde{v}_2$} 
      \put(72,53){\small $\tilde{v}_2$} 
      \put(82,53.5){\small $\tilde{v}_2^{\cut}$} 
      \put(23,83){\small $\tilde{v}_{D_2}^{\cut}$} 
      \put(23,65){\small $\tilde{v}_{D_2}^{\cut}$} 
      \put(23,33){\small $\tilde{v}_{D_3}^{\cut}$} 
      \put(23,14){\small $\tilde{v}_{D_3}^{\cut}$} 
      \put(93,53){\small $\tilde{v}_2$} 
      \put(52.5,12){\small $\tilde{v}_3$} 
      \put(52.5,37){\small $\tilde{v}_3^{\cut}$} 
      \put(2,53){\small $\tilde{v}_4$} 
      \put(11,53.5){\small $\tilde{v}_4^{\cut}$} 
      \put(26,53.5){\small $\tilde{v}_4^{\cut}$} 
      \put(69.5,33){\small $\tilde{v}_{D_4}^{\cut}$} 
      \put(41,53){\small $\tilde{v}_4$} 
     \end{overpic}
    \caption{\label{contourfig} Example of the contour $\tilde{\Sigma} = \R \cup i\R \cup \mathcal{C}$ and the associated jump matrices for $\lambda = 1$ (left) and $\lambda = -1$ (right). Dots indicate branch points and thick lines indicate the set of branch cuts $\mathcal{C}$.}
     \end{center}
\end{figure}

The above RH problem has a unique solution $\tilde{m}(x,t,k)$ for each $(x,t) \in [0,L] \times [0,\infty)$. The solution $q$ of the NLS equation (\ref{NLS}) can be obtained from  $\tilde{m}$ via the equation
\begin{align}\label{recoverq}
  q(x,t) = 2i\lim_{k \to \infty} k \tilde{m}_{12}(x,t,k), \qquad (x,t) \in [0,L] \times [0,\infty),
\end{align}
where the limit is taken along any ray $\{k \in \C | \arg k = \phi\}$ where $\phi \in \R \setminus \{n\pi/2 \, | \, n \in \Z\}$ (i.e., the ray is not contained in $\R \cup i\R$).
\end{theorem}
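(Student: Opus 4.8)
\emph{Overall approach and Step 1.} My plan is to follow the two stages of the Unified Transform for the IBVP on $[0,L]$ and then perform an explicit sequence of transformations of the resulting Riemann--Hilbert problem which, using the global relation together with the $L$-periodicity of $q$, eliminates all unknown boundary values and leads to the RH problem in the statement; the more technical estimates near branch points and at infinity I would import from the detailed companion analysis \cite{FLxperiodic}. Concretely, I would first write \eqref{NLS} as the compatibility condition of its standard $2\times 2$ Lax pair with spectral parameter $k$ and introduce the simultaneous eigenfunctions $\mu_j(x,t,k)$, $j=1,\dots,4$, of the associated linear system, normalized at the corners $(0,0)$, $(L,0)$, $(\cdot,\infty)$ of the domain $[0,L]\times[0,\infty)$, chosen so that $\mu_j$ is bounded and analytic for $k\in D_j$ (the quadrant where the relevant exponentials built from $\theta$ in \eqref{thetadef} are controlled). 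Evaluating the $\mu_j$ at $t=0$ produces the spectral functions encoded by $a(k),b(k)$ (which indeed solve the Volterra equation in the statement), evaluating at $x=0$ produces boundary spectral functions, and $L$-periodicity of $q$ forces the eigenfunctions at $x=0$ and $x=L$ to be conjugate through a monodromy matrix whose normalized trace is $\Delta$ as in \eqref{Deltadef}. Suitable ratios of the $\mu_j$ then define a sectionally meromorphic matrix $m$ with jumps across $\R\cup i\R$ expressed through $a,b$ and the boundary data.

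\emph{Step 2: the global relation and $\tilde\Gamma$.} I would then derive the global relation by evaluating the closed-contour relation among the $\mu_j$ around $\partial\big([0,L]\times[0,T]\big)$ and letting $T\to\infty$; $L$-periodicity closes this into a single algebraic identity for the unknown boundary combination. Solving it amounts to inverting the monodromy matrix along its Floquet eigendirections, which is where $\sqrt{4-\Delta^2}$ enters; the correct sheet of the square root is singled out, quadrant by quadrant, by the requirement that the associated eigenfunction remain bounded as $t\to\infty$, and the solution of the global relation is precisely the function $\tilde\Gamma$ of \eqref{tildeGammadef}. Substituting this expression for the boundary data into the jump matrices of $m$ turns them into functions of $a,b,\tilde\Gamma$ alone.

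\emph{Step 3: transformation of the RH problem and reconstruction.} Next I would carry out the elementary triangular transformations and contour deformations that move the jumps onto $\tilde\Sigma=\R\cup i\R\cup\mathcal C$ and remove every remaining explicit reference to boundary data, producing $\tilde m$ with the jumps $\tilde v_1,\dots,\tilde v_4$; the discontinuity of $\sqrt{4-\Delta^2}$ across its cuts $\mathcal C$ --- where $\tilde\Gamma_-$ and $\tilde\Gamma_+$ differ by the sign flip of the root --- yields the cut-jumps $\tilde v^{\cut}_{D_j}$, and the $\tilde v_j^{\cut}$ on $\mathcal C\cap(\R\cup i\R)$ come out as the corresponding matrix products. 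The possible simple poles of $\tilde\Gamma$ in $D_1\cup D_3$, together with their Schwarz reflections in $D_4\cup D_2$, coincide with zeros of denominators appearing in $\tilde v$; imposing that $\tilde m$ be meromorphic with exactly this pole set produces the residue conditions \eqref{pqresidues}. One must then verify local consistency: at every branch point and point of self-intersection of $\tilde\Sigma$ the cyclic product of the adjoining jump matrices must reduce to the identity (using the determinant normalization of the scattering matrix and the identities built into \eqref{tildeGammadef}), and $\tilde m$ must be only $O(1)$ at the branch points, as required in the statement. Finally, expanding the Lax eigenfunction, hence $\tilde m$, in powers of $1/k$ as $k\to\infty$ and comparing with the $x$-part of the Lax pair yields $q(x,t)=2i\lim_{k\to\infty}k\,\tilde m_{12}(x,t,k)$ along any ray off $\R\cup i\R$, i.e.\ \eqref{recoverq}; the exclusion of the disks $\mathcal D_n$ in \eqref{Dndef} reflects that this construction degenerates near $k\in\tfrac{\pi}{L}\Z$, where $e^{2ikL}=1$ and the denominators in $\tilde v$ and $\tilde\Gamma$ vanish.

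\emph{Well-posedness and the main obstacle.} Existence of a solution of the RH problem is supplied by the construction of Steps 1--3 for a smooth $L$-periodic solution $q$. For uniqueness I would use the Schwarz symmetry $\tilde m(x,t,\bar k)=\sigma\,\overline{\tilde m(x,t,k)}\,\sigma$ (with $\sigma$ a fixed involution depending on $\lambda$) together with $\det\tilde m\equiv 1$ to reduce the homogeneous problem to a vanishing lemma; when $\lambda=1$ there are no poles and positivity of $\tilde v+\tilde v^{\dagger}$ on the relevant parts of the contour closes the argument, while for $\lambda=-1$ the finitely many residue conditions are first removed by the standard conversion of a meromorphic RH problem into a holomorphic one. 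I expect the principal difficulties to be, first, making the choice of branch of $\sqrt{4-\Delta^2}$ globally coherent and showing the (in general infinite) contour $\tilde\Sigma$ together with all its jump data is self-consistent at every branch point and crossing while controlling the $O(1)$ singularities there, and second, the vanishing argument in the focusing case $\lambda=-1$, where $\tilde v+\tilde v^{\dagger}$ fails to be positive and the residue conditions must be accounted for; the algebraic bookkeeping in Step 3 is long but routine once the global relation and \eqref{tildeGammadef} are in hand.
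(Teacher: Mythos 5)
Your outline follows the paper's route: Lax pair, corner-normalized eigenfunctions, the Fokas--Its RH problem on $\R\cup i\R$, the global relation whose truncated quadratic in $B/A$ yields $\tilde\Gamma$ via $\sqrt{4-\Delta^2}$ (using $a\bar a-\lambda b\bar b=1$), explicit matrix transformations producing the cut jumps, and recovery of $q$ from the large-$k$ expansion. Two points deserve comment, one of which is a genuine soft spot in your write-up.

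First, in Step 2 you say that one ``substitutes'' the solution $\tilde\Gamma$ of the global relation for the boundary data in the jump matrices. Taken literally this is not valid: with a finite final time $T$ the global relation reads $\lambda\bar b e^{2ikL}\Gamma^2+(a-\bar a e^{2ikL})\Gamma-b=e^{4ik^2T}c^+/A^2$ with a nonzero right-hand side, so $\Gamma=B/A$ is \emph{not} equal to $\tilde\Gamma$, and replacing one by the other in the jumps would introduce an uncontrolled error. The paper instead performs an exact conjugation $\tilde m=mg$, where $g$ is an explicit sectionally defined matrix built from \emph{both} $\Gamma$ and $\tilde\Gamma$ (triangular in $D_2,D_3$, with diagonal factors $\frac{a+\lambda\bar b\Gamma e^{2ikL}}{a+\lambda\bar b\tilde\Gamma e^{2ikL}}$ etc.\ in $D_1,D_4$); the nontrivial content is that all $\Gamma$-dependence cancels from the jump data of $\tilde m$ and that $g-I$ is exponentially small along rays off $\R\cup i\R$, which is what makes the reconstruction formula \eqref{recoverq} survive the transformation. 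Your Step 3 gestures at such transformations, but the logical order matters: the conjugation is what eliminates $\Gamma$, not a substitution licensed by letting $T\to\infty$. Relatedly, the paper keeps $T$ finite (normalizing $\mu_1,\mu_4$ at $(0,T)$ and $(L,T)$ and then noting the final RH data are $T$-independent), rather than normalizing eigenfunctions at $t=\infty$ as you propose, which would require additional boundedness hypotheses.

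Second, for uniqueness the paper does not use a vanishing lemma with positivity of $\tilde v+\tilde v^\dagger$ (which, as you note, is problematic for $\lambda=-1$ and is anyway unnecessary here since existence comes from the construction): it removes the poles $\tilde P$ by a standard algebraic transformation and then runs the Liouville argument using that $\det\tilde m$ is entire, so the ratio of two solutions is entire and tends to $I$. Your proposed positivity argument buys nothing extra and would need repair in the focusing case; the determinant argument is both simpler and uniform in $\lambda$.
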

\begin{proof}
The NLS equation (\ref{NLS}) has a Lax pair given by
\begin{align}\label{laxpair}
  \mu_x+ik\hat{\sigma}_3 \mu=Q\mu, \qquad \mu_t+2ik^2 \hat{\sigma}_3 \mu=\tilde{Q}\mu,
\end{align}
where $k \in \C$ is the spectral parameter, $\mu(x,t,k)$ is a $2 \times 2$-matrix valued function, the matrices $Q$ and $\tilde{Q}$ are defined in terms of the solution $q(x,t)$ of (\ref{NLS}) by
\begin{align}
Q= \begin{pmatrix} 0 & q \\ \lambda \bar{q} & 0 \end{pmatrix}, \qquad \tilde{Q}=2kQ-i Q_x\sigma_3-i\lambda |q|^2\sigma_3,
\end{align}
and $\hat{\sigma}_3\mu = [\sigma_3, \mu]$.
Suppose $q(x,t)$ is a smooth solution of (\ref{NLS}) defined for  $(x,t) \in \R \times [0,T]$ such that $q(x+L, t) = q(x,t)$, where $L > 0$ and $0 < T < \infty$ is an arbitrary fixed final time.
Let $\mu_j(x,t,k)$, $j = 1,2,3,4$, denote the four solutions of (\ref{laxpair}) which are normalized to be the identity matrix at the points $(0,T)$, $(0,0)$, $(L,0)$, and $(L,T)$, respectively, see \cite{FI2004}. The spectral functions $a,b,A,B$ are defined for $k \in \C$ by
\begin{align}\label{abdef}
s(k)=\begin{pmatrix} \overline{a(\bar{k})} & b(k) \\ \lambda \overline{b(\bar{k})} & a(k) \end{pmatrix}, \quad
S(k)= \begin{pmatrix} \overline{A(\bar{k})} & B(k) \\ \lambda \overline {B(\bar{k})} & A(k) \end{pmatrix}, 
\end{align}
where 
$$s(k)=\mu_3(0,0,k), \qquad S(k)=\mu_1(0,0,k).$$ 
Note that $\{\mu_j\}_1^4$, $s$, and $S$ are entire functions of $k$. 
Clearly, $S(k)$ depends on $T$, whereas $s(k)$ does not. The entries of the matrices $s(k)$ and $S(k)$ are related by the so-called global relation, see \cite[Eq. (1.4)]{FI2004}:
\begin{align}\label{periodicgr}
(aA+\lambda \bar{b} e^{2ikL}B)B-(b A+\bar{a} e^{2ikL}B)A=e^{4ik^2T}c^+(k), \qquad k \in \C,
\end{align}
where $c^+(k)$ is an entire function such that
\begin{align}\label{cplusasymptotics}
c^+(k) = O\bigg(\frac{1}{k}\bigg) + O\bigg(\frac{e^{2ikL}}{k}\bigg), \qquad k \to \infty, \ k \in \C.
\end{align}
The above functions satisfy the unit determinant relations 
\begin{align}\label{det1relations}
  a \bar{a} - \lambda b \bar{b} = 1, \qquad A \bar{A} - \lambda B \bar{B} = 1.
\end{align}
As $k \to \infty$, the functions $a$ and $b$ satisfy
\begin{align}\label{abasymptotics}
a(k) = 1 + O\bigg(\frac{1}{k}\bigg) + O\bigg(\frac{e^{2ikL}}{k}\bigg), \qquad 
b(k) = O\bigg(\frac{1}{k}\bigg) + O\bigg(\frac{e^{2ikL}}{k}\bigg).
\end{align}
Using the entries of $s$ and $S$, we construct the following quantities:
\begin{align*}
& \alpha(k)= a A + \lambda \bar{b} B e^{2ikL}, \quad \beta(k)= b A + \bar{a} B e^{2ikL}, 
 \quad d(k)=a \bar{A}-\lambda b \bar{B}, \quad \delta(k)=\alpha \bar{A}-\lambda \beta \bar{B}.
\end{align*}
The eigenfunctions $\mu_j$ are related by
\begin{align}\label{murelations}
  \mu_3 = \mu_2 e^{-i\theta\hat{\sigma}_3} s(k), \qquad \mu_1 = \mu_2 e^{-i\theta\hat{\sigma}_3} S(k), \qquad
  \mu_4 = \mu_2 e^{-i\theta\hat{\sigma}_3}  \begin{pmatrix} \bar{\alpha} & \beta \\ 
  \lambda \bar{\beta} & \alpha \end{pmatrix}.
\end{align}

\begin{figure}
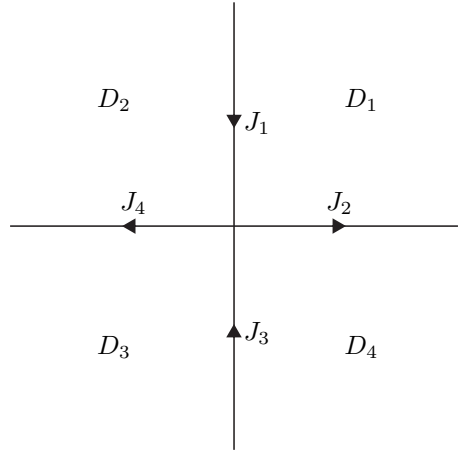

\begin{center}
\vspace{-.2cm}
\begin{overpic}[width=.4\textwidth]{jumps.pdf}
      \put(74,76){\small $D_1$} 
      \put(20,76){\small $D_2$} 
      \put(20,22){\small $D_3$} 
      \put(74,22){\small $D_4$} 
       \put(70, 53.5){\small $J_2$} 
      \put(52, 71){\small $J_1$} 
      \put(25, 53.5){\small $J_4$} 
      \put(52, 25){\small $J_3$} 
     \end{overpic}
    \caption{\label{jumps.pdf} The four open quadrants $D_1, \dots D_4$, the contour $\R \cup i\R$, and the jump matrices $J_1, \dots, J_4$ defined in (\ref{J1234def}).}
     \end{center}
\end{figure}

\subsection*{The RH problem for $M$}
It was shown in \cite{FI2004} that the sectionally meromorphic function  $M(x,t,k)$  defined by
\begin{align}\label{Mdef}
M = \begin{cases} 
(\frac{[\mu_2]_1}{\alpha}, [\mu_4]_2), \quad & k \in D_1, \\
(\frac{[\mu_1]_1}{d}, [\mu_3]_2), & k \in D_2, \\
([\mu_3]_1, \frac{[\mu_1]_2}{\bar{d}}), & k \in D_3, \\
([\mu_4]_1, \frac{[\mu_2]_2}{\bar{\alpha}}), & k \in D_4,
\end{cases}
\end{align}
satisfies
\begin{subequations}\label{rhp}
\begin{align}\label{rhpa}
& M_-(x,t,k)=M_+(x,t,k)J(x,t,k),\qquad k \in \R \cup i\R,
	\\
& M(x,t,k) = I + O(1/k), \qquad k \to \infty,
\end{align}
\end{subequations}
where the contour $\R \cup i\R$ is oriented as in Figure \ref{jumps.pdf} and
\begin{align}\label{J}
J=
\begin{cases}
J_2, & \arg k=0,\\
J_1, & \arg k=\pi/2,\\
J_4, & \arg k=\pi ,\\
J_3, & \arg k=3\pi/2,
\end{cases}
\end{align}
with
\begin{align}\nonumber
&J_1= \begin{pmatrix} \delta/d & -Be^{2ikL}e^{-2i\theta} \\ \lambda \bar{B} e^{2i\theta}/(d\alpha) & a/\alpha \end{pmatrix}, && 
 J_2= \begin{pmatrix} 1 & -\beta e^{-2i\theta}/\bar{\alpha} \\ \lambda \bar{\beta} e^{2i\theta}/\alpha & 1/(\alpha \bar{\alpha}) \end{pmatrix},
	\\  \label{J1234def}
& J_3= \begin{pmatrix} \bar{\delta}/\bar{d} & -Be^{-2i\theta}/\bar{d}\bar{\alpha} \\ \lambda \bar{B} e^{-2ikL} e^{2i\theta} & \bar{a}/\bar{\alpha} \end{pmatrix}, 
&&
 J_4=J_3 J_2^{-1} J_1.
\end{align}
The jump matrix $J$ depends on $x$ and $t$ only through the function $\theta(x,t,k)$ defined in (\ref{thetadef}).  
The solution $q(x,t)$ of (\ref{NLS}) can be recovered from  $M$ using
\begin{align}\label{sol}
q(x,t)=2i\lim_{k\rightarrow \infty} k M_{12}(x,t,k),
\end{align}
where the limit may be taken in any open quadrant.

If the functions $\alpha(k)$ and $d(k)$ have no zeros, then $M$ is analytic for $k \in \C \setminus (\R \cup i\R)$ and it can be characterized as the unique solution of the RH problem (\ref{rhp}). The jump matrix $J$  depends via the spectral functions on the initial data  $q(x,0)$ as well as on the boundary values $q(0,t)$ and $q_x(0,t)$. If all these boundary values are known, then the value of $q(x,t)$ at any point $(x,t)$  can be obtained by solving the RH problem (\ref{rhp}) for $M$ and using (\ref{sol}).
If the functions $\alpha(k)$ and $d(k)$ have zeros, then $M$ may have pole singularities and the RH problem has to be supplemented with appropriate residue conditions, see \cite[Proposition 2.3]{FI2004}.

\subsection*{The RH problem for $m$}
It is possible to formulate a RH problem which involves jump matrices defined in terms of the ratio $\Gamma = B/A$, as opposed to $A$ and $B$. Indeed, consider the sectionally meromorphic function $m(x,t,k)$ defined in terms of the eigenfunctions $\mu_j(x,t,k)$, $j = 1, \dots, 4$, by
\begin{align}\label{mdef}
m = \begin{cases} 
(\frac{A[\mu_2]_1}{\alpha}, \frac{[\mu_4]_2}{A}), \quad & k \in D_1, \\
(\frac{[\mu_1]_1}{d}, [\mu_3]_2), & k \in D_2, \\
([\mu_3]_1, \frac{[\mu_1]_2}{\bar{d}}), & k \in D_3, \\
(\frac{[\mu_4]_1}{\bar{A}}, \frac{\bar{A} [\mu_2]_2}{\bar{\alpha}}), & k \in D_4.
\end{cases}
\end{align}
The function $m$  is related to the solution $M$ defined in (\ref{Mdef}) by 
$$m = MH,$$
where the sectionally meromorphic function $H$ is defined by
$$H_1 = \begin{pmatrix} A & 0 \\ 0 & 1/A \end{pmatrix}, \qquad
H_2 = H_3 = I,\qquad
H_4 = \begin{pmatrix} 1/\bar{A} & 0 \\ 0 & \bar{A} \end{pmatrix},$$
with $H_j$ denoting the restriction of $H$ to $D_j$, $j = 1,2,3,4$. 

The limits of $m(x,t,k)$ as $k$ approaches $(\R \cup i\R) \setminus \{0\}$ from the left and right satisfy
\begin{align}\label{mjump}
  m_-(x,t,k) = m_+(x, t, k) v(x, t, k), \qquad k \in (\R \cup i\R) \setminus \{0\},
\end{align}
where the jump matrix $v$ is defined by
\begin{align}\nonumber
&  v_1 = \begin{pmatrix}  \frac{a  - \lambda b \bar{\Gamma} - \lambda \Gamma (\bar{a}\bar{\Gamma} - \bar{b})e^{2ikL} }{a- \lambda b \bar{\Gamma}} & -\Gamma  e^{2ikL} e^{-2 i \theta } \\
 \frac{\lambda \bar{\Gamma} e^{2 i \theta } }{(a - \lambda b \bar{\Gamma}) (a + \lambda\bar{b} \Gamma  e^{2ikL})} & \frac{a}{a + \lambda \bar{b} \Gamma  e^{2ikL}} \end{pmatrix}, && \arg k = \frac{\pi}{2},
	\\ \nonumber
& v_2 =  \begin{pmatrix} 1 - \lambda \Gamma \bar{\Gamma} 
& -\frac{(\bar{a} \Gamma e^{2ikL}+b) e^{-2 i \theta }}{\bar{a}+ \lambda b \bar{\Gamma} e^{-2ikL}} \\
 \frac{\lambda (a \bar{\Gamma}e^{-2ikL}+\bar{b}) e^{2 i \theta } }{a+ \lambda \bar{b} \Gamma  e^{2ikL} } &
   \frac{1}{(a+ \lambda \bar{b} \Gamma  e^{2ikL}) (\bar{a} + \lambda b \bar{\Gamma} e^{-2ikL})}  \end{pmatrix},&& \arg k = 0,
   	\\  \nonumber
& v_3 = \begin{pmatrix} \frac{\bar{a}  - \lambda \bar{b} \Gamma - \lambda \bar{\Gamma}(a\Gamma - b) e^{-2ikL}}{\bar{a} - \lambda \bar{b} \Gamma } & - \frac{\Gamma e^{-2 i \theta}}{(\bar{a} - \lambda \bar{b} \Gamma) (\bar{a} + \lambda b \bar{\Gamma} e^{-2ikL} )} \\
 \lambda \bar{\Gamma} e^{-2ikL} e^{2 i \theta} & \frac{\bar{a}}{\bar{a} + \lambda b \bar{\Gamma} e^{-2ikL}}  \end{pmatrix}, && \arg k = -\frac{\pi}{2},
   	\\ \label{vdef}
& v_4 = \begin{pmatrix}	
 \frac{1 - \lambda \Gamma \bar{\Gamma}}{(a - \lambda b \bar{\Gamma}) (\bar{a} - \lambda \bar{b} \Gamma )} & -\frac{(a \Gamma - b)e^{-2 i \theta } }{\bar{a} - \lambda \bar{b} \Gamma } \\
 \frac{\lambda(\bar{a} \bar{\Gamma} - \bar{b})e^{2 i \theta }}{a - \lambda b \bar{\Gamma}} & 1
 \end{pmatrix}, && \arg k = \pi.
\end{align}
Furthermore, 
\begin{subequations}\label{RHmconditions}
\begin{align}
& \text{$m(x,t,k) = I + O(k^{-1})$ as $k \to \infty$,}
	\\
& \text{$m(x,t,k) = O(1)$ as $k \to 0$.}
\end{align}
\end{subequations}
Also, the function $m$ satisfies $\det m = 1$, as well as the symmetry relations
\begin{align}\label{msymm}
  m_{11}(x,t,k) = \overline{m_{22}(x,t,\bar{k})}, \qquad 
  m_{21}(x,t,k) = \lambda \overline{m_{12}(x,t,\bar{k})}.
\end{align}

\subsection*{The RH problem for $\tilde{m}$}
In order to define a RH problem which depends on $\tilde{\Gamma}$ instead of $\Gamma$, where $\tilde{\Gamma}$ is independent of $T$, we introduce the function $g(k)$ by
\begin{align}\nonumber
& g_1 = \begin{pmatrix} \frac{a + \lambda \bar{b} \Gamma e^{2ikL}}{a + \lambda \bar{b} \tilde{\Gamma} e^{2ikL}} & (\tilde{\Gamma} - \Gamma) e^{2ikL} e^{-2i\theta} \\ 0 & \frac{a + \lambda \bar{b} \tilde{\Gamma} e^{2ikL}}{a + \lambda \bar{b}\Gamma e^{2ikL}} \end{pmatrix}, \qquad
g_2 = \begin{pmatrix} 1 & 0 \\ 
\frac{\lambda (\bar{\tilde{\Gamma}} - \bar{\Gamma})e^{2i\theta}}{(a - \lambda b \bar{\Gamma})(a - \lambda b \bar{\tilde{\Gamma}})} & 1 \end{pmatrix},
	\\ \label{gdef}
& g_3 = \begin{pmatrix} 1 & \frac{(\tilde{\Gamma} - \Gamma)e^{-2i\theta}}{(\bar{a} - \lambda \bar{b} \Gamma)(\bar{a} - \lambda \bar{b} \tilde{\Gamma})} \\
0 & 1 \end{pmatrix}, \qquad
g_4 = \begin{pmatrix} \frac{\bar{a} + \lambda b \bar{\tilde{\Gamma}} e^{-2ikL}}{\bar{a} + \lambda b \bar{\Gamma} e^{-2ikL}} & 0 \\
\lambda(\bar{\tilde{\Gamma}} - \bar{\Gamma}) e^{-2ikL} e^{2i\theta} & \frac{\bar{a} + \lambda b \bar{\Gamma} e^{-2ikL}}{\bar{a} + \lambda b \bar{\tilde{\Gamma}} e^{-2ikL}} \end{pmatrix},
\end{align} 
where $g_j$ denotes the restriction of $g$ to  $D_j$, $j = 1, \dots, 4$.
In (\ref{gdef}), $\Gamma = B/A$ and $\tilde{\Gamma}$ is defined by (\ref{tildeGammadef}).
We define $\tilde{m}(x,t,k)$ by
\begin{align}\label{tildemdef}
\tilde{m} = mg.
\end{align}
Let $\mathcal{C}$ be the set of branch cuts of $\sqrt{4 - \Delta^2}$ introduced in Theorem \ref{mainth}. The function $\sqrt{4 - \Delta^2}$ is analytic and single-valued on $\C \setminus \mathcal{C}$; the branch is fixed by the condition that $\sqrt{4 - \Delta^2} = 2\sin(kL)(1 + O(k^{-1}))$ as $k \in \C  \setminus \cup_{n\in \Z} \mathcal{D}_n$ tends to infinity, see \cite{FLxperiodic} for details.

A straightforward computation shows that $\tilde{m}$ satisfies the jump relation (\ref{mtildejump}) on $(\R \cup i\R) \setminus \mathcal{C}$, where $\tilde{v}$ is given by (\ref{vdef}) with $\Gamma$  replaced by $\tilde{\Gamma}$.
However, since the square root $\sqrt{4 - \Delta^2}$ changes sign as $k$ crosses a branch cut, $\tilde{v}$ is not given by the same expression as $v$ on $(\R \cup i\R) \cap \mathcal{C}$. Also, if $\lambda = -1$, then $\tilde{m}$ may have jumps across the contours $\mathcal{C} \cap D_j$, $j = 1, \dots, 4$. Remarkably, all these jumps can be expressed in terms of $a$ and $b$, yielding (\ref{tildevdef}).

The proof of uniqueness of the solution $\tilde{m}$ of the RH problem of Theorem \ref{mainth} uses the fact that the problem with $\tilde{P} \neq \emptyset$ can be transformed into a problem with $\tilde{P} = \emptyset$ and the fact that $\det \tilde{m}$ is an entire function, see \cite{FLxperiodic}.

Fix $(x,t) \in [0,L] \times [0,\infty)$. Choose $T \in (t, \infty)$ and define $\tilde{m}$ by (\ref{tildemdef}) with $m$ and $g$ defined using $T$ as final time. Straightforward computations show that  $\tilde{m} = MHg$ satisfies the RH problem of Theorem \ref{mainth} and that (\ref{recoverq}) holds at $(x,t)$ (the verification of (\ref{recoverq}) uses (\ref{sol}) and the fact that $g - I$ is exponentially small as $k \to \infty$ along any ray $\arg k = \text{constant}$ not contained in $\R \cup i\R$; details are given in \cite{FLxperiodic}).
\end{proof}

\begin{remark}
The solutions obtained via algebraic geometry correspond to the particular case of the above formalism when the number of branch cuts of $\sqrt{4 - \Delta^2}$ is finite. This implies that in these particular cases the associated RH problems can be solved via Riemann theta functions. 
\end{remark}

\section{Example: Constant initial data}\label{constantexamplesec}
To illustrate the approach of Theorem \ref{mainth}, we consider the case of constant initial data
$$q(x,0) = q_0, \qquad x \in [0,L],$$
where $q_0 > 0$ can be taken to be positive thanks to the phase invariance of (\ref{NLS}). 
In this case, direct integration of the $x$-part of the Lax pair (\ref{laxpair}) leads to the following expressions for the spectral functions $a$ and $b$:
\begin{align}\label{abexample}
a(k) = e^{ikL}\bigg(\cos(Lr(k)) - \frac{ik\sin(Lr(k))}{r(k)}\bigg), \qquad
b(k) = -\frac{q_0 e^{ikL}\sin(Lr(k))}{r(k)},
\end{align}
where $r(k)$ denotes the square root
$$r(k) = \sqrt{k^2 - \lambda q_0^2}.$$
It follows that
$$\Delta(k) = 2\cos(L r(k)).$$
Note that $a$, $b$, and $\Delta$ are entire functions of $k$ even though $r(k)$  has a branch cut.

The function $4 - \Delta^2 = 4\sin^2(Lr)$ has the two simple zeros
$$\lambda^\pm = \begin{cases} \pm q_0 & \text{if $\lambda = 1$}, \\
\pm iq_0 & \text{if $\lambda = -1$},
\end{cases}$$
as well as the infinite sequence of double zeros
$$\pm \frac{\sqrt{n^2 \pi^2 + \lambda L^2 q_0^2}}{L}, \qquad n \in \Z \setminus \{0\}.$$
If $\lambda = 1$, then all zeros are real; if $\lambda = -1$, then the zeros are real for $|n|  \geq L q_0/\pi$ and purely imaginary for $|n| < L q_0/\pi$.
The function $\sqrt{4 - \Delta^2}$ is single-valued on $\C \setminus \mathcal{C}$, where $\mathcal{C}$ consists of a single branch cut:
$$\mathcal{C} = (\lambda^-, \lambda^+).$$
Let us fix the branch in the definition of $r$ so that $r:\C \setminus \mathcal{C} \to \C$ is analytic and $r(k) \sim k$ as $k \to \infty$.
Then 
$$\sqrt{4 - \Delta^2} = 2\sin(Lr),$$
and hence the function $\tilde{\Gamma}:\C \setminus \mathcal{C} \to \C$ defined in (\ref{tildeGammadef}) is given by
\begin{align}\label{tildeGammaexample}
 \tilde{\Gamma}(k) = -\frac{\lambda i (k - r(k))}{q_0}.
\end{align}
The function $\tilde{\Gamma}(k)$ has no poles. The contour  $\tilde{\Sigma}$ is given by $\tilde{\Sigma} = \R \cup i\R$ and is oriented as in Figure \ref{jumps.pdf}; the origin is its only point of self-intersection and so $\tilde{\Sigma}_\star = (\R \cup i\R) \setminus \{0, \lambda^\pm\}$.
Substituting the expressions (\ref{abexample}) and (\ref{tildeGammaexample}) for $a,b,\tilde{\Gamma}$ into the definition (\ref{tildevdef}) of the jump matrix $\tilde{v}$, we get
\begin{subequations}\label{tildevjexample}
\begin{align}
& \tilde{v}_1 = \begin{pmatrix} 
 \frac{2 \lambda  r (k-r)}{q_0^2} & \frac{i \lambda 
   (k-r) e^{-2 i (\theta - k L)}}{q_0} \\
 \frac{i (k-r) e^{2 i (\theta - k L + L r)}}{q_0} & \frac{e^{i L
   r} (r \cos(L r)-i k \sin(L r))}{r}
 \end{pmatrix}, &&  k \in i\R_+ \setminus \mathcal{C},
	\\ 
& \tilde{v}_2 =  \begin{pmatrix} 
 \frac{2 \lambda  r (k-r)}{q_0^2} & \frac{i \lambda 
   (k-r) e^{-2 i (\theta - k L + L r)}}{q_0} \\
 \frac{i (k-r) e^{2 i (\theta - k L + L r)}}{q_0} & 1 
  \end{pmatrix}, && k \in \R_+ \setminus \mathcal{C},
   	\\  
& \tilde{v}_3 = \begin{pmatrix} 
 \frac{2 \lambda  r (k-r)}{q_0^2} & \frac{i \lambda 
   (k-r) e^{-2 i (\theta -k L + L r)}}{q_0} \\
 \frac{i (k-r) e^{2i(\theta-kL)}}{q_0} & \frac{e^{-i L r}
   (r \cos(L r)+i k \sin(L r))}{r} 
     \end{pmatrix}, && k \in i\R_- \setminus \mathcal{C},
   	\\ 
&  \tilde{v}_4 = \begin{pmatrix}	
  \frac{2 \lambda  r (k-r)}{q_0^2} & \frac{i \lambda 
   (k-r) e^{-2 i (\theta - k L)}}{q_0} \\
 \frac{i (k-r) e^{2i(\theta-kL)}}{q_0} & 1 
 \end{pmatrix}, && k \in \R_- \setminus \mathcal{C}.
\end{align}
\end{subequations}
If $\lambda = 1$, then $\mathcal{C} = (-q_0, q_0)$ so the formulation of the RH problem also involves the jump matrices $\tilde{v}_2^{\cut}$ and $\tilde{v}_4^{\cut}$, whereas if $\lambda = -1$, then $\mathcal{C} = (-iq_0, iq_0)$ so the formulation instead involves the jump matrices $\tilde{v}_1^{\cut}$ and $\tilde{v}_3^{\cut}$. 
Let $\mathfrak{r}(k) = \sqrt{|k^2 - \lambda q_0^2|} \geq 0$.
A calculation shows that if $\lambda = 1$, then
\begin{subequations}\label{tildev24cutexample}
\begin{align}
& \tilde{v}_2^{\cut} = 
\begin{pmatrix}
0 & \frac{(\mathfrak{r}(k)+i k) e^{-2 i (\theta - k L)}}{q_0} \\
 -\frac{(\mathfrak{r}(k)-i k) e^{2i(\theta-kL)}}{q_0} &
   e^{-2 L \mathfrak{r}(k)} \\
\end{pmatrix},
&& k \in \mathcal{C} \cap \R_+,	
	\\
& \tilde{v}_4^{\cut} = \begin{pmatrix}
 0 & -\frac{(\mathfrak{r}(k)-i k) e^{-2 i (\theta - k L)}}{q_0} \\
 \frac{(\mathfrak{r}(k)+i k) e^{2i(\theta-kL)}}{q_0} & 1 
\end{pmatrix},
&& k \in \mathcal{C} \cap \R_-,
\end{align}
\end{subequations}
where $\mathfrak{r}(k) = \sqrt{q_0^2 - k^2} \geq 0$. If $\lambda=-1$, then
\begin{subequations}\label{tildev13cutexample}
\begin{align}
& \tilde{v}_1^{\cut} = \begin{pmatrix}
 0 & \frac{i (\mathfrak{r}(k)-k) e^{-2 i (\theta - k L)}}{q_0} \\
 \frac{i (\mathfrak{r}(k)+k) e^{2i(\theta-kL)}}{q_0} &
   \frac{1+e^{2 i L \mathfrak{r}(k)}}{2}+\frac{k (1-e^{2 i L
   \mathfrak{r}(k)})}{2 \mathfrak{r}(k)} 
\end{pmatrix}, && k \in \mathcal{C} \cap i\R_+,
	\\
& \tilde{v}_3^{\cut} = \begin{pmatrix}
 0 & -\frac{i (\mathfrak{r}(k)+k) e^{-2 i (\theta - k L )}}{q_0} \\
 -\frac{i (\mathfrak{r}(k)-k) e^{2i(\theta-kL)}}{q_0} &
   \frac{1+e^{-2 i L \mathfrak{r}(k)}}{2} +\frac{k (1-e^{-2 i L
   \mathfrak{r}(k)})}{2 \mathfrak{r}(k)}
\end{pmatrix}, && k \in \mathcal{C} \cap i\R_-,
\end{align}
\end{subequations}
where $\mathfrak{r}(k) = \sqrt{q_0^2 + k^2} \geq 0$. We conclude that in the case of constant initial data, the RH problem for $\tilde{m}$ can be formulated as follows (recall that the contour $\R \cup i\R$ is oriented as in Figure \ref{jumps.pdf}; see also Figure \ref{constantexamplefig}).

\begin{figure}
\begin{center}
\begin{overpic}[width=.45\textwidth]{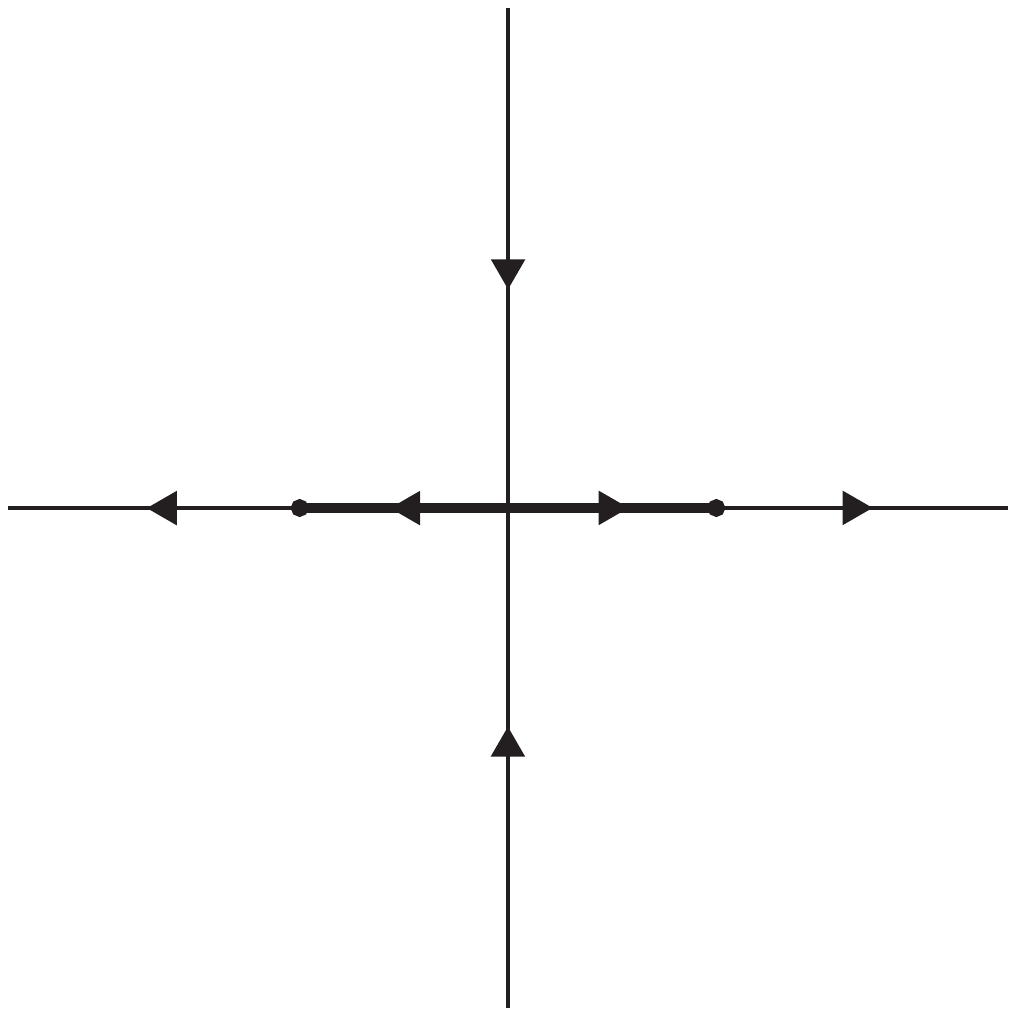}
      \put(23,45.5){\small $-q_0$} 
      \put(69,45.5){\small $q_0$} 
      \put(52.5,72){\small $\tilde{v}_1$} 
      \put(82,53.5){\small $\tilde{v}_2$} 
      \put(58,54){\small $\tilde{v}_2^{\cut}$} 
      \put(52.5,25){\small $\tilde{v}_3$} 
      \put(14,53.5){\small $\tilde{v}_4$} 
      \put(37.5,54){\small $\tilde{v}_4^{\cut}$} 
     \end{overpic}\qquad
     \begin{overpic}[width=.45\textwidth]{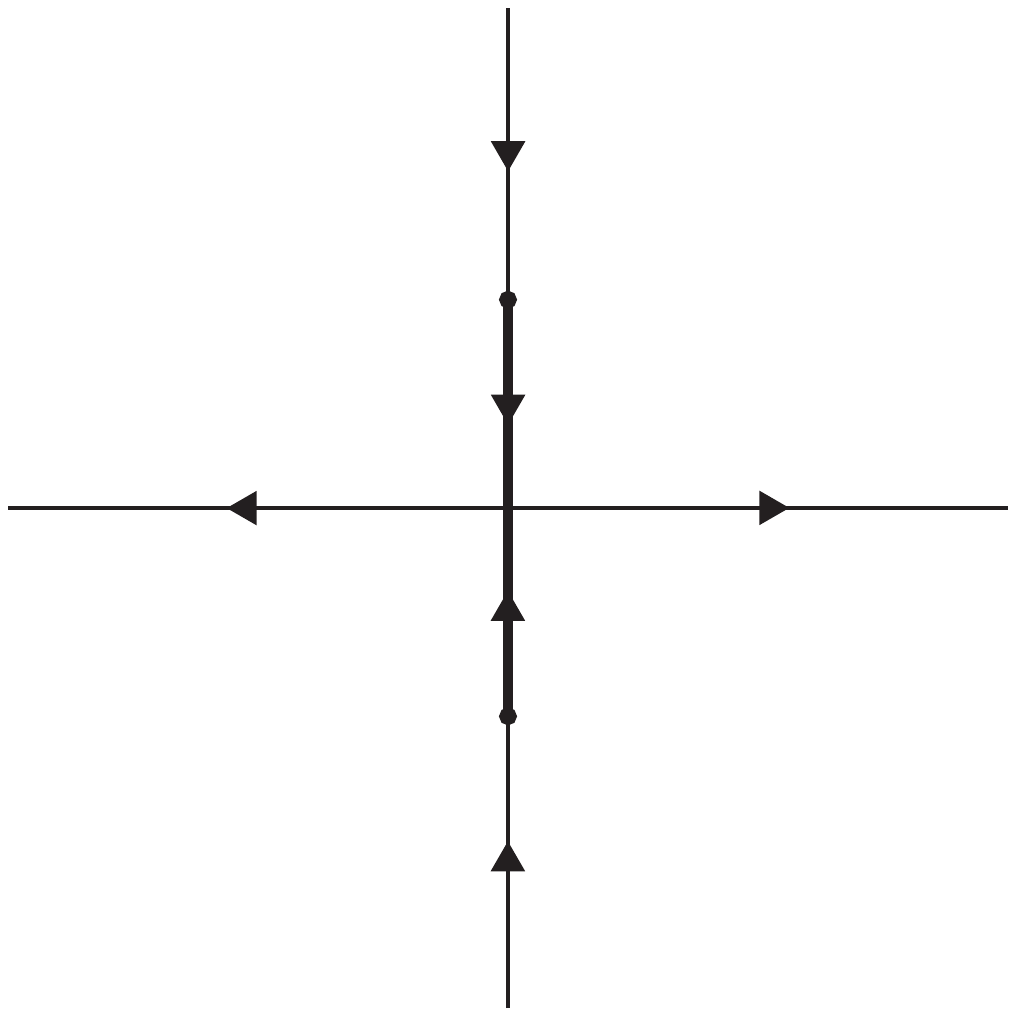}
      \put(52.5,28){\small $-iq_0$} 
      \put(52.5,69){\small $iq_0$} 
      \put(52.5,83.5){\small $\tilde{v}_1$} 
      \put(52.5,58.5){\small $\tilde{v}_1^{\cut}$} 
      \put(74,53.5){\small $\tilde{v}_2$} 
      \put(52.5,14){\small $\tilde{v}_3$} 
      \put(52.5,39){\small $\tilde{v}_3^{\cut}$} 
      \put(21.5,53.5){\small $\tilde{v}_4$} 
     \end{overpic}
    \caption{\label{constantexamplefig} The jump matrices for the RH problem \ref{RHmtildeexample} for $\lambda = 1$ (left) and $\lambda = -1$ (right).}
     \end{center}
\end{figure}

\begin{RHproblem}{\bf(The RH problem for constant initial data)}\label{RHmtildeexample}
Find a $2 \times 2$-matrix valued function $\tilde{m}(x,t,k)$ with the following properties:
\begin{itemize}
\item $\tilde{m}(x,t,\cdot) : \C \setminus (\R \cup i\R) \to \C^{2 \times 2}$ is analytic.

\item The limits of $m(x,t,k)$ as $k$ approaches $(\R \cup i\R) \setminus \{0, \lambda^\pm\}$ from the left and right exist, are continuous on $(\R \cup i\R) \setminus \{0, \lambda^\pm\}$, and satisfy
\begin{align}\label{mtildejumpexample}
  \tilde{m}_-(x,t,k) = \tilde{m}_+(x, t, k) \tilde{v}(x, t, k), \qquad k \in (\R \cup i\R) \setminus \{0, \lambda^\pm\},
\end{align}
where $\tilde{v}$ is defined as follows (see Figure \ref{constantexamplefig}):
\begin{itemize}
\item If $\lambda = 1$, then
$$\tilde{v} = \begin{cases} 
\tilde{v}_1 & \text{on $i\R_+$},			\\
\tilde{v}_2 & \text{on $(q_0, +\infty)$},	\\
\tilde{v}_2^{\cut} & \text{on $(0, q_0)$},	\\
\tilde{v}_3 & \text{on $i\R_-$},			\\
\tilde{v}_4 & \text{on $(-\infty, -q_0)$},	\\
\tilde{v}_4^{\cut} & \text{on $(-q_0, 0)$},
\end{cases}$$
where $\{\tilde{v}_j\}_1^4$, $\tilde{v}_2^{\cut}$, and $\tilde{v}_4^{\cut}$
are defined by (\ref{tildevjexample}) and (\ref{tildev24cutexample}).

\item If $\lambda = -1$, then
$$\tilde{v} = \begin{cases} 
\tilde{v}_1 & \text{on $(iq_0, +\infty)$},	\\
\tilde{v}_1^{\cut} & \text{on $(0, iq_0)$},	\\
\tilde{v}_2 & \text{on $\R_+$},			\\
\tilde{v}_3 & \text{on $(-i\infty, -iq_0)$},	\\
\tilde{v}_3^{\cut} & \text{on $(-iq_0, 0)$}, 	\\
\tilde{v}_4 & \text{on $\R_-$},
\end{cases}$$
where $\{\tilde{v}_j\}_1^4$, $\tilde{v}_1^{\cut}$, and $\tilde{v}_3^{\cut}$
are defined by (\ref{tildevjexample}) and (\ref{tildev13cutexample}).

\end{itemize}

\item $\tilde{m}(x,t,k) = I  + O\big(k^{-1}\big)$ as $k \to \infty$, $k \in \C \setminus \cup_{n\in \Z} \mathcal{D}_n$.

\item $\tilde{m}(x,t,k) = O(1)$ as $k \to \{0, \lambda^\pm\}$, $k \in \C \setminus (\R \cup i\R)$.

\end{itemize}
\end{RHproblem}

\begin{remark}
It is easy to verify that the jump matrices in RH problem \ref{RHmtildeexample}  satisfy the following consistency conditions at the origin:
\begin{align*}
\begin{cases}
 (\tilde{v}_4^{\cut} )^{-1} \tilde{v}_3 (\tilde{v}_2^{\cut} )^{-1} \tilde{v}_1\big|_{k=0} = I, & \lambda = 1,
	\\
\tilde{v}_4^{-1} \tilde{v}_3^{\cut} \tilde{v}_2^{-1} \tilde{v}_1^{\cut} \big|_{k=0} = I, & \lambda = -1.
\end{cases}
\end{align*}
\end{remark}

\subsection{Solution of the RH problem for $\tilde{m}$}\label{constantexamplesubsec}
In what follows, we solve the RH problem \ref{RHmtildeexample} for $\tilde{m}$ explicitly by transforming it to a RH problem which has a constant off-diagonal jump across the branch cut $\mathcal{C} = (\lambda^-, \lambda^+)$.

The jump matrices $\tilde{v}_1$  and $\tilde{v}_3$ in (\ref{tildevjexample}) admit the factorizations
\begin{align*}
 & \tilde{v}_1 = \begin{pmatrix}
   1 & 0 \\
 \frac{i q_0 e^{2 i (\theta - k L + L r)}}{2 \lambda  r} & 1 
  \end{pmatrix}\begin{pmatrix}
   \frac{2 \lambda  r (k-r)}{q_0^2} & \frac{i \lambda 
   (k-r) e^{-2 i (\theta - k L)}}{q_0} \\
 0 & \frac{k+r}{2 r}
  \end{pmatrix},
  	\\
&  \tilde{v}_3 =  \begin{pmatrix}  
   1 & 0 \\
 \frac{i q_0 e^{2 i(\theta - k L)}}{2 \lambda  r} & 1 
   \end{pmatrix}\begin{pmatrix}
 \frac{2 \lambda  r (k-r)}{q_0^2} & \frac{i \lambda 
   (k-r) e^{-2 i (\theta -k L+L r)}}{q_0} \\
 0 & \frac{k+r}{2 r} 
   \end{pmatrix}.
\end{align*} 
It follows that the jump across $i\R \setminus \mathcal{C}$ can be removed by introducing a new solution $\hat{m}$ by
$$\hat{m} = \tilde{m} \times \begin{cases}
\begin{pmatrix}
 1 & 0 \\
 \frac{i q_0 e^{2 i (\theta -k L+L r)}}{2 \lambda  r} & 1
 \end{pmatrix}, & k \in D_1, 
 	\\
 \begin{pmatrix}
 \frac{2 \lambda  r (k-r)}{q_0^2} & \frac{i \lambda 
   (k-r) e^{-2 i (\theta - k L)}}{q_0} \\
 0 & \frac{k+r}{2 r}
 \end{pmatrix}^{-1}, & k \in D_2, 
 	\\
\begin{pmatrix}
 1 & 0 \\
 \frac{i q_0 e^{2 i (\theta - k L)}}{2 \lambda  r} & 1
 \end{pmatrix}, & k \in D_3, 
 	\\
\begin{pmatrix}
 \frac{2 \lambda  r (k-r)}{q_0^2} & \frac{i \lambda 
   (k-r) e^{-2 i (\theta -k L+L r)}}{q_0} \\
 0 & \frac{k+r}{2 r} 
 \end{pmatrix}^{-1}, & k \in D_4.
\end{cases}
$$
Straightforward computations using (\ref{tildevjexample})-(\ref{tildev13cutexample}) show that $\hat{m}$  only has a jump across the cut $(\lambda^-, \lambda^+)$.
Let us orient
$$(\lambda^-, \lambda^+) = \begin{cases} (-q_0,q_0), & \lambda = 1, \\ 
(-iq_0, iq_0), & \lambda = -1, \end{cases}$$
to the right if $\lambda = 1$  and upward if $\lambda = -1$.
We find that $\hat{m}(x,t,\cdot) : \C \setminus (\lambda^-, \lambda^+) \to \C^{2 \times 2}$ is analytic, that $\hat{m} = I  + O(k^{-1})$ as $k \to \infty$, and that $\hat{m}$  satisfies the jump condition
\begin{align}\label{mhatjump}
\hat{m}_- = \hat{m}_+ \begin{pmatrix}  
 0 & f  \\
-1/f & 0 
  \end{pmatrix} \quad \text{for $k \in (\lambda^-, \lambda^+)$},
\end{align}  
where $f(k) \equiv f(x,t,k)$ is defined by
$$f(k) = -\frac{2 i \lambda r_+(k)}{q_0} e^{-2 i (\theta - k L)}
= \begin{cases} 
\frac{2 \sqrt{|q_0^2-k^2|} e^{-2 i (\theta - k L)}}{q_0}, & \lambda = 1,	\\
-\frac{2 i \sqrt{|k^2+q_0^2|} e^{-2 i (\theta - k L)}}{q_0}, & \lambda = -1.
\end{cases}$$

The jump matrix in (\ref{mhatjump}) can be made constant (i.e., independent of $k$) by performing another transformation. Define $\delta(k) \equiv \delta(x,t,k)$ by
\begin{align}\label{deltadef}
\delta(k) = \exp\bigg(\frac{r(k)}{2\pi i} \int_{\lambda^-}^{\lambda^+} \frac{\ln f(s)}{r_+(s) (s-k)}ds\bigg), \qquad k \in \C \setminus (\lambda^-, \lambda^+).
\end{align}
The function $\delta$ satisfies the jump relation $\delta_+ \delta_- = f$ on $(\lambda^-, \lambda^+)$ and $\lim_{k\to \infty} \delta(k) = \delta_\infty$, where
\begin{align}\label{deltainftydef}
\delta_\infty = \exp\bigg(-\frac{1}{2\pi i} \int_{\lambda^-}^{\lambda^+} \frac{\ln f(s)}{r_+(s)}ds\bigg).
\end{align}
Moreover, $\delta(k) = O((k-\lambda^\pm)^{1/4})$ and $\delta(k)^{-1} = O((k-\lambda^\pm)^{-1/4})$ as $k \to \lambda^\pm$.
Consequently, $\check{m} = \delta_\infty^{-\sigma_3} \hat{m} \delta^{\sigma_3}$ satisfies the following RH problem: (i) $\check{m}(x,t,\cdot) : \C \setminus (\lambda^-, \lambda^+) \to \C^{2 \times 2}$ is analytic, (ii) $\check{m} = I  + O(k^{-1})$ as $k \to \infty$, (iii) $\check{m} = O((k-\lambda^\pm)^{-1/4})$ as $k \to \lambda^\pm$, and (iv) $\check{m}$  satisfies the jump condition
$$\check{m}_- = \check{m}_+ \begin{pmatrix}  
 0 & 1  \\
-1 & 0 
  \end{pmatrix} \quad \text{for $k \in (\lambda^-, \lambda^+)$}.$$
The unique solution of this RH problem is given explicitly by
\begin{align}\label{mcheckexplicit}
\check{m} = \frac{1}{2}\begin{pmatrix} Q + Q^{-1} & i(Q - Q^{-1}) \\ -i(Q - Q^{-1}) & Q + Q^{-1} \end{pmatrix} \quad \text{with} \ \  Q(k) = \bigg(\frac{k-\lambda^+}{k - \lambda^-}\bigg)^{1/4},
\end{align}
where the branch of $Q: \C \setminus (\lambda^-, \lambda^+) \to \C$ is such that $Q \sim 1$ as $k \to \infty$. Since $\tilde{m}$ is easily obtained from $\check{m}$  by inverting the transformations $\tilde{m} \to \hat{m} \to \check{m}$, this provides an explicit solution of the RH problem for $\tilde{m}$. 

We next use the formula (\ref{mcheckexplicit}) for $\check{m}$ together with (\ref{recoverq}) to find $q(x,t)$. By (\ref{recoverq}),
\begin{align}\label{qfrommcheck}
q(x,t) 
= 2i\lim_{k \to \infty} k \hat{m}_{12}(x,t,k) 
= 2i \delta_\infty^{2} \lim_{k \to \infty} k \check{m}_{12}(x,t,k) 
= \frac{\lambda^+ - \lambda^-}{2} \delta_\infty^{2}.
\end{align}
In order to compute $\delta_\infty$, we note that
\begin{align}\label{deltainfty}
\delta_\infty = e^{c_0}
\exp\bigg(\int_{\lambda^-}^{\lambda^+} \frac{s (x-L)}{\pi r_+(s)}ds\bigg)
\exp\bigg(\int_{\lambda^-}^{\lambda^+} \frac{2s^2 t}{\pi r_+(s)}ds\bigg),
\end{align}
where 
$$c_0 = -\frac{1}{2\pi i} \int_{\lambda^-}^{\lambda^+} \frac{\ln (- 2 i \lambda  r_+(s)) - \ln{q_0}}{r_+(s)}ds.$$
The integral in (\ref{deltainfty}) involving $x - L$  vanishes because the integrand is odd, and the integral involving $t$ can be computed by opening up the contour and deforming the contour to infinity:
$$\int_{\lambda^-}^{\lambda^+} \frac{2s^2 t}{\pi r_+(s)}ds
= -\frac{1}{2}\oint_{C} \frac{2s^2 t}{\pi r(s)}ds
= - i \lambda q_0^2 t,$$
where $C$ is a counterclockwise circle encircling the cut $[\lambda^-, \lambda^+]$.
If $\lambda = 1$, then the substitution $s = q_0 \sin\theta$ gives
\begin{subequations}\label{c0example}
\begin{align}
c_0 = \frac{1}{\pi} \int_{0}^{q_0} \frac{\ln (2\sqrt{q_0^2 - s^2}) - \ln{q_0}}{\sqrt{q_0^2 - s^2}}ds
= \frac{1}{\pi} \int_{0}^{\pi/2} \ln (2\cos{\theta})d\theta 
= 0,
\end{align}
while, if $\lambda = -1$, then the substitutions $s = i\sigma = i q_0 \sin\theta$ yield
\begin{align}\label{c0lambda1}
c_0 = \frac{1}{\pi} \int_{0}^{q_0} \frac{\ln (- 2 i \sqrt{q_0^2 - \sigma^2}) - \ln{q_0}}{\sqrt{q_0^2 - \sigma^2}}  d\sigma
= \frac{1}{\pi} \int_{0}^{\pi/2} \ln (- 2 i \cos\theta)d\theta
= -\frac{\pi i}{4}.
\end{align}
\end{subequations}
It follows that
\begin{align}\label{deltainftyfinal}
\delta_\infty = \begin{cases} e^{- i \lambda q_0^2 t}, & \lambda = 1, \\
e^{-\frac{\pi i}{4}} e^{- i \lambda q_0^2 t}, & \lambda = -1.\end{cases}
\end{align}
Substituting this expression for $\delta_\infty$  into (\ref{qfrommcheck}), we find that the solution $q(x,t)$ of (\ref{NLS}) corresponding to the constant initial data $q(x,0) = q_0 \in \R$ is given by 
\begin{align}\label{qexample}
q(x,t) = q_0 e^{-2i \lambda q_0^2 t}.
\end{align}
It is of course easily verified that this $q$ satisfies the correct initial value problem.

\bigskip
\noindent
{\bf Acknowledgements} {\it ASF acknowledges support from the EPSRC in the form of a senior fellowship. JL acknowledges support from the G\"oran Gustafsson Foundation, the Ruth and Nils-Erik Stenb\"ack Foundation, the Swedish Research Council, Grant No. 2015-05430, and the European Research Council, Grant Agreement No. 682537.}

\bigskip
\noindent
{\bf Conflict of interest}
On behalf of all authors, the corresponding author states that there is no conflict of interest.

\bibliographystyle{plain}
\bibliography{is}

\end{document}